\def\p{\partial}
\def\R{\mathbb{R}}
\def\C{\mathbb{C}}
\def\vv<#1>{\langle#1\rangle}
\def\ol{\overline}
\def\s1{{\mathbb{S}^1}}
\def\XXint#1#2{\setbox0=\hbox{$#1{#2}{\int}$}{#2}\kern-.5\wd0 }
\def\XXint#1#2#3{{\setbox0=\hbox{$#1{#2#3}{\int}$}
     \vcenter{\hbox{$#2#3$}}\kern-.5\wd0}}
\def\vv<#1>{{\left\langle#1\right\rangle}}
\def\sph{\mathbb{S}}
\def\conf{{\rm Conf}}
\def\diag{{\rm diag}}
\newtheorem{thm}{Theorem}[section]
\newtheorem{lem}{Lemma}[section]
\theoremstyle{definition}
\theoremstyle{remark}
\newtheorem{rem}{Remark}[section]
\numberwithin{equation}{section}
\begin{document}
\title{Rigidity of isometric immersions into the light cone }
\author{Jian-Liang Liu$^1$}
\address{Department of Mathematics, Shantou University, Shantou, Guangdong, 515063, China}
\email{liujl@stu.edu.cn}
\thanks{$^1$Research partially supported by the China Postdoctoral Science Foundation 2016M602497 and NSFC 61601275. }
\author{Chengjie Yu$^2$}
\address{Department of Mathematics, Shantou University, Shantou, Guangdong, 515063, China}
\email{cjyu@stu.edu.cn}
\thanks{$^2$Research partially supported by NSFC 11571215 and the Yangfan project of the Guangdong Province.}
\renewcommand{\subjclassname}{%
  \textup{2010} Mathematics Subject Classification}
\subjclass[2010]{Primary 53C50; Secondary 57R40}
\date{}
\keywords{rigidity, isometric immersion, light cone}
\begin{abstract}
In this paper, we show the rigidity of isometric immersions for a Riemannian manifold of dimension $n-1$ into the light cone of $n+1$ dimensional  Minkowski, de Sitter  and anti-de Sitter spacetimes for $n\geq 3$.
\end{abstract}
\maketitle\markboth{Liu \& Yu}{Isometric immersions into the light cone}
\section{Introduction}
Isometric embedding into Euclidean spaces is a classical subject in differential geometry. For examples, the Weyl problem (see \cite{Ni,Po}), Nash's embedding theorem (see \cite{Na}) etc. The book \cite{HH} by Han and Hong makes an excellent exposition of the subject, especially for isometric embeddings of surfaces into $\R^3$. Because of applications in  General Relativity, isometric embeddings of Riemannian manifolds into Minkowski spacetime were also studied by experts. For example, the works \cite{BS,CY,GJ} considered isometric embedding of codimension 1 into Minkowski spacetime. In \cite{BLY,Epp,Lau,SC,WY1,WY2}, the authors defined quasi-local mass/energy by isometric embedding of a surface into $\R^{1,3}$. Previously, quasi-local mass was defined by isometric embedding into $\R^3$ in \cite{BY,Ki,LY} . Note that isometric embedding of surfaces into $\R^{1,3}$ is of codimension 2 and hence lack of rigidity. To handle this problem, in \cite{SC,WY2}, the authors consider critical isometric embeddings of certain energy functionals. There are other interesting proposals to handle this problem. In \cite{Epp}, Epp proposed to impose one more restriction $F=F_{\rm ref}$ on the isometric embedding to handle this problem, where $F$ and $F_{\rm ref}$ are essentially the normal curvatures of the surface in the physical spacetime and the reference spacetime respectively. On the other hand, in \cite{BLY,Lau}, the authors considered isometric embedding of surfaces into the light cone of $\R^{1,3}$, so that reduced the isometric embedding problem to the codimension 1 case. However, according to the authors's knowledge, rigidity of isometric embeddings in the two proposals has not been proved.

The existence of isometric embeddings into the light cone of $\R^{1,3}$ was first proved by Brinkmann \cite{Br} (see also \cite{AD,PR,LJ}). Brinkmann showed that an $(n-1)$-dimensional Riemannian manifold $(M,g)$ can be locally embedded into the light cone of $\R^{1,n}$ if and only if $(M,g)$ is locally conformally flat. In fact, Brinkmann wrote down the isometric embedding explicitly by using the conformal factor.  In this paper, we prove that the isometric embedding constructed by Brinkmann is essentially the unique isometric embedding into the light cone. Indeed, we also extend Brinkmann's result and rigidity of isometric embeddings to light cones of de Sitter and anti-de Sitter spacetimes.

For the Minkowski spacetime $\R^{1,n}$, let $o^{(0)}$ be the origin, then the future light cone at $o^{(0)}$ (the set forms by future directed null geodesics starting at $o^{(0)}$) is
\begin{equation}
\mathcal L_+^{(0)}=\{(t,x)\in \R^{1,n}\ |\ -t^2+\|x\|^2=0\ {\rm and}\ t>0\}.
\end{equation}
The $n+1$ dimensional de Sitter spacetime $dS_{n+1}$ can be defined as the hypersurface given by
\begin{equation}
-t^2+x_1^2+x_2^2+\cdots+x_{n+1}^2=1.
\end{equation}
 of $\R^{1,{n+1}}$ equipped with the induced metric. The Lorentz group $O(1,n+2)$ acts on $dS_{n+1}$ transitively. Let $o^{(1)}=(0,0,\cdots,1)$. Then, the isotropy group at $o^{(1)}$ of the action is $O(1,n)$ by identifying $A\in O(1,n)$ with
 \begin{equation}\label{eqn-A-1}
 A^{(1)}=\left(\begin{array}{ll}A&0\\0&1
 \end{array}\right)\in O(1,n+1).
 \end{equation}
Moreover, it is not hard to see that the future light cone at $o^{(1)}$ is
\begin{equation}
\mathcal L_+^{(1)}=\{(t,x)\in dS_{n+1}\ |\ x_{n+1}=1,t>0\}.
\end{equation}
The $n+1$ dimensional anti-de Sitter spacetime $AdS_{n+1}$ can be defined as the hypersurface
\begin{equation}
-t_1^2-t_2^2+x_1^2+x_2^2+\cdots+x_n^2=-1
\end{equation}
of $\R^{2,n}$ equipped with the induced metric. The group $O(2,n)$ acts on
$AdS_{n+1}$ transitively. Let $o^{(-1)}=(1,0,0,\cdots,0)$. Then, the isotropy group at $o^{(-1)}$ of the action is also $O(1,n)$ by identifying $A\in O(1,n)$ with
\begin{equation}\label{eqn-A--1}
 A^{(-1)}=\left(\begin{array}{ll}1&0\\0&A
 \end{array}\right)\in O(2,n).
 \end{equation}
 It is not hard to see that the future light cone of $AdS_{n+1}$ at $o^{(-1)}$ is
 \begin{equation}
\mathcal L_+^{(-1)}=\{(t,x)\in AdS_{n+1}\ |\ t_1=1,t_2>0\}.
\end{equation}

The main result of this paper is as follows.
\begin{thm}\label{thm-main}
\begin{enumerate}
\item a connected Riemannian manifold $(M,g)$ can be isometrically immersed into $\mathcal L_+^{(k)}$ for $k=0,\pm 1$ if and only if $(M,g)$ can be conformally immersed into $\sph^{n-1}$ equipped with the standard metric;
\item a closed connected Riemannian manifold $(M,g)$ of dimension $n-1$  with $n\geq 3$ can be isometrically immersed into $\mathcal L_+^{(k)}$ for $k=0,\pm 1$ if and only if $(M,g)$ is conformally diffeomorphic to $\sph^{n-1}$ equipped with the standard metric. Moreover, isometric immersion of $(M,g)$ into $\mathcal L_+^{(k)}$ is rigid.  More precisely, let  $\varphi_1:M\to \mathcal L_+^{(k)}$ and $\varphi_2:M\to \mathcal L_+^{(k)}$ be two isometric immersions. Then, there is a unique  $\tau\in O_+(1,n)$ such that $\varphi_2=\tau^{(k)}\circ\varphi_1$ on $M$;
\item isometric immersion of a connected $(n-1)$-dimensional Riemannian manifold $(M,g)$ (not necessarily closed) into $\mathcal L_+^{(k)}$ for $k=0,\pm 1$  is rigid with $n\geq 4$. More precisely, let  $\varphi_1:M\to \mathcal L_+^{(k)}$ and $\varphi_2:M\to \mathcal L_+^{(k)}$ be two isometric immersions. Then, there is a unique  $\tau\in O_+(1,n)$ such that $\varphi_2=\tau^{(k)}\circ\varphi_1$ on $M$.
\end{enumerate}
Here $O_+(1,n)$ is the group of Lorentz transformations preserving time direction, $\tau^{(0)}=\tau$ and the definitions of $\tau^{(k)}$ for $k=1$ and $k=-1$ are \eqref{eqn-A-1} and \eqref{eqn-A--1} respectively.
\end{thm}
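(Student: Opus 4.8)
The plan is to reduce all three cases to the Minkowski cone $\mathcal L_+^{(0)}$ and then to the conformal geometry of the round sphere. First I would observe that, with the (degenerate) metric induced from the ambient Lorentzian manifold, $\mathcal L_+^{(1)}$ and $\mathcal L_+^{(-1)}$ are isometric to $\mathcal L_+^{(0)}$ via the explicit maps $\Phi_1(t,x)=(t,x,1)$ and $\Phi_{-1}(t,x)=(1,t,x)$, and that $\Phi_k$ intertwines the natural action of $\tau\in O_+(1,n)$ on $\mathcal L_+^{(0)}$ with that of $\tau^{(k)}$ on $\mathcal L_+^{(k)}$, because $\tau^{(k)}$ fixes exactly the extra coordinate that $\Phi_k$ pins down. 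Hence it suffices to treat $k=0$ and then transfer every conclusion back through $\Phi_k$.

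Next I would set up the dictionary. Writing a point of $\mathcal L_+^{(0)}$ as $r(1,\omega)$ with $(r,\omega)\in\mathbb{R}_{>0}\times\mathbb{S}^{n-1}$, a short computation gives that the metric induced from $\mathbb{R}^{1,n}$ is the degenerate cone metric $r^2 g_{\mathbb{S}^{n-1}}$, with the radial direction in its kernel. Therefore an isometric immersion $\varphi:(M,g)\to\mathcal L_+^{(0)}$ is precisely a pair $(r,\omega)$ with $r\in C^\infty(M,\mathbb{R}_{>0})$ and $\omega:M\to\mathbb{S}^{n-1}$ satisfying $\omega^* g_{\mathbb{S}^{n-1}}=r^{-2}g$; in particular $\omega$ is a conformal immersion, and conversely any conformal immersion $\omega$ (say $\omega^* g_{\mathbb{S}^{n-1}}=\rho g$ with $\rho>0$) yields $\varphi=\rho^{-1/2}(1,\omega)$. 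This proves (1). For (2), when $M$ is closed and connected with $\dim M=n-1\ge 2$, such an $\omega$ is a local diffeomorphism out of a compact manifold, hence a covering map onto the simply connected $\mathbb{S}^{n-1}$, hence a conformal diffeomorphism, which gives the stated classification of $(M,g)$.

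For the rigidity, write two isometric immersions as $\varphi_i=r_i(1,\omega_i)$. In case (2) the $\omega_i$ are conformal diffeomorphisms of $\mathbb{S}^{n-1}$, so $f:=\omega_2\circ\omega_1^{-1}$ is a conformal automorphism of the round sphere; in case (3), with $\dim M=n-1\ge 3$, Liouville's theorem promotes each locally defined conformal map $\omega_2\circ(\omega_1|_U)^{-1}$ between open subsets of $\mathbb{S}^{n-1}$ to a global Möbius transformation, and since two Möbius transformations agreeing on a nonempty open set coincide, these patch over connected $M$ to a single global $f$ with $\omega_2=f\circ\omega_1$. Using the light-cone realization $\mathrm{Conf}(\mathbb{S}^{n-1})\cong O_+(1,n)$, pick $\tau\in O_+(1,n)$ inducing $f$, so $\tau(1,\omega)=\lambda(\omega)(1,f(\omega))$ for a positive function $\lambda$ with $f^* g_{\mathbb{S}^{n-1}}=\lambda^{-2}g_{\mathbb{S}^{n-1}}$. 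Comparing $r_2^{-2}g=\omega_2^* g_{\mathbb{S}^{n-1}}=\omega_1^*(f^* g_{\mathbb{S}^{n-1}})=(\lambda\circ\omega_1)^{-2}r_1^{-2}g$ forces $r_2=(\lambda\circ\omega_1)\,r_1$, whence $\tau\circ\varphi_1=r_1\lambda(\omega_1)(1,f(\omega_1))=r_2(1,\omega_2)=\varphi_2$; transferring back by $\Phi_k$ gives $\varphi_2=\tau^{(k)}\circ\varphi_1$. Uniqueness is immediate: $\varphi_1(M)$ contains the scalings of the vectors $(1,\omega)$ over an open subset of $\mathbb{S}^{n-1}$, which span $\mathbb{R}^{1,n}$, so a Lorentz transformation fixing $\varphi_1(M)$ pointwise is the identity.

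The main obstacle is concentrated in one place: passing from the abstract conformal automorphism $f$ of $\mathbb{S}^{n-1}$ to an honest Lorentz transformation $\tau$ acting on the whole light cone. This requires the light-cone model of $\mathrm{Conf}(\mathbb{S}^{n-1})=O_+(1,n)$ and then the check that the radial factors $r_1,r_2$ are automatically matched, which is exactly where the isometry hypothesis (not just conformality of the $\omega_i$) is used. The hypothesis $n\ge 4$ in (3) enters only through Liouville's theorem, which fails for surfaces; for closed $M$ the weaker $n\ge 3$ suffices, since there one has a genuine global conformal diffeomorphism and can invoke the (valid also in dimension two) fact that conformal automorphisms of $\mathbb{S}^{n-1}$ are Möbius.
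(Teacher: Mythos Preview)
Your proposal is correct and follows essentially the same route as the paper: reduce to the Minkowski cone via the obvious identifications, translate isometric immersions into conformal immersions of $\mathbb{S}^{n-1}$ via the cone metric $r^2g_{\mathbb{S}^{n-1}}$, and then use the isomorphism $O_+(1,n)\cong\mathrm{Conf}(\mathbb{S}^{n-1})$ together with Liouville's theorem (for part~(3)) to produce and globalize $\tau$. The only cosmetic difference is that the paper packages the ``radial factors match automatically'' step as a separate lemma (two isometric immersions with the same spherical projection coincide), whereas you verify $r_2=(\lambda\circ\omega_1)r_1$ by hand; your explicit span argument for uniqueness is likewise equivalent to the paper's appeal to injectivity of $\tau\mapsto\ol\tau$.
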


The result (1) in Theorem \ref{thm-main}  can be viewed as an extension of Brinkmann's result since the standard metric on $\sph^{n-1}$ is locally conformally flat. It was also obtained in \cite{AD} for Minkowski spacetime. The result (2) in Theorem \ref{thm-main} was also obtained in \cite{AD,PR} for Minkowski spacetime.

It may be worth to note that (2) of Theorem \ref{thm-main} is not true for $n=2$. For example, let
$$M=\{(2\cos\theta,2\sin\theta)\ |\ \theta\in [0,2\pi]\}$$
 be a circle of radius 2. Let
 $$\varphi_1(2\cos\theta,2\sin\theta)=(2,2\cos\theta,2\sin\theta)$$
 and
 $$\varphi_2(2\cos\theta,2\sin\theta)=(1,\cos(2\theta),\sin(2\theta)).$$
 It is clear that $\varphi_1$ and $\varphi_2$ are both isometric immersions of $M$ into the light cone of $\R^{1,2}$. However, there is no Lorentz transformation $\tau$ such that $\varphi_2=\tau\circ\varphi_1$. Our result indicates that this kind of phenomenon never happens for $n\geq 3$.

 Moreover, the result (3) of Theorem \ref{thm-main} is not true for $n=3$. For example, let $M$ be the unit disk of $\C$. Let
 $$\ol\varphi_1:M\to \C\cup\{\infty\}=\sph^2$$
  be the natural inclusion and
 $$\ol\varphi_2:M\to \C\cup\{\infty\}=\sph^2$$
be a holomorphic embedding such that $\varphi_2(M)$ is the unit square in $\C$. The existence of such a holomorphic embedding is guaranteed by the Riemann mapping theorem. By Lemma \ref{lem-main} in the next section, we know that there are isometric embeddings $\varphi_1$  and $\varphi_2$ of $M$ into the future light cone of $\R^{1,3}$, such that $\pi\circ\varphi_i=\ol\varphi_i$ for $i=1,2$. However, because there is no conformal transformation of $\sph^2$ that transforms the unit disk to the unit square, there is no Lorentz transformation $\tau$ of $\R^{1,3}$ such that $\tau\circ\varphi_1=\varphi_2$. The difference of the cases $n=3$ and $n\geq 4$ is due to that Liouville's theorem (see \cite[Theorem A.3.7 \& Corollary A.3.8]{BP}) is not true for $n=3$.

Furthermore, by a similar argument of the proof of Theorem \ref{thm-main}, we can show that any isometry of the future light cone can be extended as an isometry of the whole spacetime for Minkowski, de Sitter and anti-de Sitter spacetimes.
\begin{thm}\label{thm-iso-cone}
Let $\varphi:\mathcal L_+^{(k)}\to \mathcal L_+^{(k)}$ be an isometry for $k=0,\pm1$ with $n\geq 3$. Then, there is a unique Lorentz transformation $\tau\in O_+(1,n)$ such that $\tau^{(k)}|_{\mathcal L_+^{(k)}}=\varphi$. Here $\tau^{(0)}=\tau$ and the definitions of $\tau^{(k)}$ for $k=1$ and $k=-1$ are \eqref{eqn-A-1} and \eqref{eqn-A--1} respectively.
\end{thm}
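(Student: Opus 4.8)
The plan is to read off the isometry from the degenerate geometry of the cone and then invoke conformal rigidity of $\sph^{n-1}$. Parametrize $\mathcal L_+^{(0)}$ by $\R_+\times\sph^{n-1}$ via $(t,\omega)\mapsto(t,t\omega)$; a straightforward computation (cf. Lemma \ref{lem-main}) shows that the metric induced on $\mathcal L_+^{(0)}$ from $\R^{1,n}$ is the degenerate symmetric $2$-tensor $t^2 g_{\sph^{n-1}}$, with $g_{\sph^{n-1}}$ the round metric, and that $\mathcal L_+^{(1)}$ and $\mathcal L_+^{(-1)}$ are isometric copies of $\mathcal L_+^{(0)}$ sitting inside $dS_{n+1}$ and $AdS_{n+1}$, so it suffices to treat $k=0$. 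At each point the kernel of $t^2 g_{\sph^{n-1}}$ is exactly the radial line $\R\,\partial_t$, hence is preserved by the isometry $\varphi$; consequently $\varphi$ maps the null rays of the cone to null rays and descends to a diffeomorphism $\bar\varphi$ of the leaf space $\sph^{n-1}$, with $\varphi(t,\omega)=(f(t,\omega),\bar\varphi(\omega))$ for a smooth positive function $f$.

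Next I would check that $\bar\varphi$ is conformal on $(\sph^{n-1},g_{\sph^{n-1}})$: differentiating $\varphi$ along curves contained in the cross-section $\{t=1\}$ and using that the induced metric is $t^2 g_{\sph^{n-1}}$ gives $\bar\varphi^*g_{\sph^{n-1}}=f(1,\cdot)^{-2}g_{\sph^{n-1}}$. Since $n\geq 3$, every conformal diffeomorphism of the round $\sph^{n-1}$ is a Möbius transformation, and the Möbius group is canonically isomorphic to $O_+(1,n)$ acting on $\sph^{n-1}$ realized as the space of directions of future null rays of $\R^{1,n}$; this is Liouville's theorem \cite[Theorem A.3.7 \& Corollary A.3.8]{BP} for $n\geq 4$ and the classical description of conformal self-maps of $\sph^2=\C P^1$ as (anti-)holomorphic Möbius maps for $n=3$. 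Hence there is a unique $\tau\in O_+(1,n)$ inducing $\bar\varphi$ on $\sph^{n-1}$.

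Now $\tau^{(k)}$ is a linear isometry of the ambient space preserving $\mathcal L_+^{(k)}$, so it restricts to an isometry of $\mathcal L_+^{(k)}$; by linearity, in the coordinates $(t,\omega)$ it acts as $(t,\omega)\mapsto(\lambda(\omega)t,\bar\varphi(\omega))$ for a smooth positive $\lambda$, so it induces the same map $\bar\varphi$ on $\sph^{n-1}$. Therefore $\psi:=(\tau^{(k)}|_{\mathcal L_+^{(k)}})^{-1}\circ\varphi$ is an isometry of $\mathcal L_+^{(k)}$ descending to the identity of $\sph^{n-1}$, i.e. $\psi(t,\omega)=(h(t,\omega),\omega)$; pulling $t^2 g_{\sph^{n-1}}$ back through $\psi$ and comparing coefficients forces $h(t,\omega)^2=t^2$, hence $h(t,\omega)=t$ and $\psi=\id$. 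Thus $\varphi=\tau^{(k)}|_{\mathcal L_+^{(k)}}$, and uniqueness of $\tau$ follows from the injectivity of the homomorphism $O_+(1,n)\to\conf(\sph^{n-1})$.

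The essential input — and the only genuinely nontrivial step — is the identification $\conf(\sph^{n-1})\cong O_+(1,n)$, which for $n\geq 4$ is Liouville's theorem; everything else is the same bookkeeping about the degenerate cone metric that already underlies Lemma \ref{lem-main} and the proof of Theorem \ref{thm-main}. This is also why the restriction $n\geq 3$ is needed: for $n=2$ the conformal group of $\sph^1$ is infinite-dimensional, and indeed the analogue of the theorem fails.
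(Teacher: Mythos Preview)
Your argument is correct and follows essentially the same route as the paper: parametrize the cone as $\R_+\times\sph^{n-1}$ with the degenerate metric $t^2 g_{\sph^{n-1}}$, observe that the isometry descends to a conformal diffeomorphism $\bar\varphi$ of $\sph^{n-1}$, invoke the isomorphism $O_+(1,n)\cong\conf(\sph^{n-1})$ to produce $\tau$, and then match the radial factor. The only cosmetic differences are that you phrase the descent via preservation of the null distribution (the kernel of the degenerate metric) rather than by the paper's direct computation $\|\p\bar\varphi/\p t\|^2=0$, and you finish by showing $(\tau|_{\mathcal L_+})^{-1}\circ\varphi=\id$ where the paper compares the two radial components $f$ and $\tilde f$ directly; these are the same argument in slightly different packaging.
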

Note that, similar as before, Theorem \ref{thm-iso-cone} is not true for $n=2$.

Because
\begin{equation}\label{eqn-cone-1}
\mathcal L_+^{(1)}=\{(t,x)\in dS_{n+1}\ |\ -t^2+x_1^2+\cdots x_n^2=0,\ x_{n+1}=1,\ t>0  \}
\end{equation}
and
\begin{equation}\label{eqn-cone--1}
\mathcal L_+^{(-1)}=\{(t_1,t_2,x)\in AdS_{n+1}\ |\ t_1=1,\ -t_2^2+x_1^2+\cdots x_n^2=0, t_2>0  \},
\end{equation}
the proof of Theorem \ref{thm-main} can be reduced to the proof of the Minkowski case. So, the proofs of (1) and (2) in Theorem \ref{thm-main} are similar to that in \cite{AD,PR} for Minkowski spacetime. Since the proofs are short, we will also include them for completeness. The proofs of (3) in Theorem \ref{thm-main} and Theorem \ref{thm-iso-cone} are based on a simple application of the conformal structure on the null infinity (see \cite[\S 18.5]{Pe}).
\section{Rigidity of isometric immersions}

We will simply write the future light cone of $\R^{1,n}$ at the origin as $\mathcal L_+$. It admits a natural map $\pi$ to $\sph^{n-1}$ defined as
\begin{equation}
\pi(t,x)=\frac{x}{t}.
\end{equation}
In fact, $\sph^{n-1}$ can be viewed as the space of future directed light rays. So, it can be considered as the null infinity physically. The following lemma is a key ingredient in our proof of Theorem \ref{thm-main}.
\begin{lem}\label{lem-main}
Let $(M^m,g)$ be a Riemannian manifold and $\varphi:M\to \mathcal L_+$ be an isometric immersion. Then
\begin{enumerate}
\item $$(\pi\circ\varphi)^*g_{\sph^{n-1}}=t^{-2}g.$$
Here $g_{\sph^{n-1}}$ is the standard metric of $\sph^{n-1}$ and suppose that $\varphi=(t,x)$;
\item conversely, if $\psi:M\to \sph^{n-1}$ is a conformal immersion such that
\begin{equation}
\psi^*g_{\sph^{n-1}}=\lambda^{-2}g
\end{equation}
where $\lambda$ is a positive smooth function on $M$. Then, $\varphi=(\lambda,\lambda\psi):M\to\mathcal L_+$ is an isometric immersion;
\item let $\varphi_1$ and $\varphi_2$ be two isometric immersions of $M$ into $\mathcal L_+$ such that $\pi\circ\varphi_1=\pi\circ\varphi_2$. Then $\varphi_1=\varphi_2$.
\end{enumerate}
\end{lem}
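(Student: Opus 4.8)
The plan is to work in local coordinates and reduce everything to the metric identity satisfied by a map into the null cone. Write $\varphi = (t, x)$ with $x : M \to \R^n$, so that the defining equation of $\mathcal L_+$ reads $\|x\|^2 = t^2$ with $t > 0$, and $\pi\circ\varphi = x/t$. For part (1), I would differentiate the constraint $\|x\|^2 = t^2$ to get $\la x, dx\ra = t\, dt$, and then compute the pullback of the round metric directly. Using $g_{\sph^{n-1}}$ as the metric induced from $\R^n$ on the unit sphere, one has $(\pi\circ\varphi)^* g_{\sph^{n-1}} = \left\langle d\!\left(\tfrac{x}{t}\right), d\!\left(\tfrac{x}{t}\right)\right\rangle = \tfrac{1}{t^2}\|dx\|^2 - \tfrac{2}{t^3}\la x, dx\ra\, dt + \tfrac{1}{t^4}\|x\|^2\, dt^2$. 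Substituting $\la x, dx\ra = t\, dt$ and $\|x\|^2 = t^2$ collapses the last two terms against each other, leaving $\tfrac{1}{t^2}\left(\|dx\|^2 - dt^2\right)$, which is exactly $t^{-2}\varphi^*(\text{Minkowski metric}) = t^{-2} g$ since $\varphi$ is isometric. This is a short computation and I expect no obstacle here beyond bookkeeping.

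For part (2), the verification is essentially the reverse of (1). Given a conformal immersion $\psi$ with $\psi^* g_{\sph^{n-1}} = \lambda^{-2} g$, set $\varphi = (\lambda, \lambda\psi)$. Then $\varphi$ maps into $\mathcal L_+$ because $\|\lambda\psi\|^2 = \lambda^2\|\psi\|^2 = \lambda^2$ (as $\psi$ lands in the unit sphere) and $\lambda > 0$. To see $\varphi$ is an isometric immersion, compute the pullback of the Minkowski metric: $\varphi^*(\text{Mink}) = \|d(\lambda\psi)\|^2 - d\lambda^2$. Expanding, $\|d(\lambda\psi)\|^2 = d\lambda^2\|\psi\|^2 + 2\lambda\, d\lambda\, \la \psi, d\psi\ra + \lambda^2\|d\psi\|^2 = d\lambda^2 + \lambda^2\, \psi^* g_{\sph^{n-1}}$, using $\|\psi\|^2 = 1$ and $\la\psi, d\psi\ra = 0$ (differentiate $\|\psi\|^2 = 1$). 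Hence $\varphi^*(\text{Mink}) = \lambda^2\, \psi^* g_{\sph^{n-1}} = \lambda^2 \cdot \lambda^{-2} g = g$, so $\varphi$ is isometric; it is an immersion since $d\varphi$ is injective wherever $d\psi$ is (and $\psi$ is assumed an immersion). Again, routine.

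The substantive part is (3). Suppose $\varphi_1 = (t_1, x_1)$ and $\varphi_2 = (t_2, x_2)$ are isometric immersions of $M$ into $\mathcal L_+$ with $\pi\circ\varphi_1 = \pi\circ\varphi_2$, i.e. $x_1/t_1 = x_2/t_2 =: \psi$. Then by part (1), $t_1^{-2} g = \psi^* g_{\sph^{n-1}} = t_2^{-2} g$, which forces $t_1 = t_2$ pointwise (both positive and $g$ nondegenerate); and then $x_1 = t_1\psi = t_2\psi = x_2$, so $\varphi_1 = \varphi_2$. The key observation making this work is precisely that the conformal factor in part (1) is the $t$-coordinate itself, so agreement of the projections pins down $t$ and hence the whole map. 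I do not anticipate a genuine obstacle in (3); the only point requiring a word of care is that $g$ is a genuine (nondegenerate, positive-definite) metric so that $t_1^{-2} g = t_2^{-2} g$ as bilinear forms implies $t_1 = t_2$ as functions, which is immediate. The lemma as a whole is elementary; its role is to transfer the codimension-two problem in $\mathcal L_+$ to a codimension-zero (conformal) problem on $\sph^{n-1}$, where the rigidity input — Liouville's theorem for $n \ge 4$ and the uniformization/conformal rigidity of $\sph^{n-1}$ for closed $M$ — can then be applied in the proof of Theorem \ref{thm-main}.
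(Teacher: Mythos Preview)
Your proof is correct and follows essentially the same approach as the paper: a direct computation of the pullback using the constraint $\|x\|^2 = t^2$ for (1), the reverse computation for (2), and reading off $t_1 = t_2$ from the conformal factor for (3). The only cosmetic difference is that you use compact differential-form notation where the paper writes out local coordinates $y_i$ explicitly; your write-up of (2) is in fact more detailed than the paper's, which simply says the computation is similar to (1).
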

\begin{proof}
\begin{enumerate}
\item Let $(y_1,y_2,\cdots,y_{m})$ be a local coordinate of $M$. Because $\varphi$ is an isometry, we have
\begin{equation}
\begin{split}
g\left(\frac{\p}{\p y_i},\frac{\p}{\p y_j}\right)=&\vv<\varphi_*\frac{\p}{\p y_i},\varphi_*\frac{\p}{\p y_j}>\\
=&\vv<\frac{\p t}{\p y_i}\frac{\p}{\p t}+\sum_{k=1}^n\frac{\p x_k}{\p y_i}\frac{\p}{\p x_k},\frac{\p t}{\p y_j}\frac{\p}{\p t}+\sum_{l=1}^n\frac{\p x_l}{\p y_j}\frac{\p}{\p x_l}>\\
=&-\frac{\p t}{\p y_i}\frac{\p t}{\p y_j}+\sum_{k=1}^n\frac{\p x_k}{\p y_i}\frac{\p x_k}{\p y_j}.
\end{split}
\end{equation}
On the other hand, let $(z_1,z_2,\cdots,z_n)$ be natural coordinates of $\R^n$. Then, along the map $\pi:\mathcal L_+\to \sph^{n-1}$, $z=\frac{x}{t}$. Hence,
\begin{equation}
\begin{split}
&(\pi\circ\varphi)^*g_{\sph^{n-1}}\left(\frac{\p}{\p y_i},\frac{\p}{\p y_j}\right)\\
=&\vv<\sum_{k=1}^n\frac{\p z_k}{\p y_i}\frac{\p}{\p z_k},\sum_{l=1}^n\frac{\p z_l}{\p y_j}\frac{\p}{\p z_l}>\\
=&\sum_{k=1}^n\left(-\frac{x_k}{t^2}\frac{\p t}{\p y_i}+\frac{1}{t}\frac{\p x_k}{\p y_i}\right)\left(-\frac{x_k}{t^2}\frac{\p t}{\p y_j}+\frac{1}{t}\frac{\p x_k}{\p y_j}\right)\\
=&\frac{1}{t^2}\left(-\frac{\p t}{\p y_i}\frac{\p t}{\p y_j}+\sum_{k=1}^n\frac{\p x_k}{\p y_i}\frac{\p x_k}{\p y_j}\right)\\
=&\frac{1}{t^2}g\left(\frac{\p}{\p y_i},\frac{\p}{\p y_j}\right).
\end{split}
\end{equation}
Here, we have used that $t^2=\|x\|^2$. This completes the proof of (1).
\item The proof of (2) is similar to that of (1) by direct computation.
\item Let $\varphi_1=(t,x)$ and $\varphi_2=(\tilde t,\tilde x)$. By (1) and that $\pi\circ\varphi_1=\pi\circ\varphi_2$, we have $t=\tilde t$ and $\frac{x}{t}=\frac{\tilde x}{\tilde t}$. Hence $\varphi_1=\varphi_2$.
\end{enumerate}
\end{proof}

It is clear that $\tau(\mathcal L_+)=\mathcal L_+$ for any $\tau\in O_+(1,n)$, and $\tau$ naturally descends to a map $\ol\tau:\sph^{n-1}\to\sph^{n-1}$ along $\pi: \mathcal L_+\to \sph^{n-1}$ for any $\tau\in O_+(1,n)$ such that
 \begin{equation}
 \pi\circ\tau=\ol\tau\circ\pi.
 \end{equation}
 By a direct computation, one can find that $\ol \tau$ is in fact a conformal transformation of $\sph^{n-1}$ equipped with standard metric. Conversely, any conformal transformation of $\sph^{n-1}$ can be obtained in this way for $n\geq 3$. More precisely, the group homomorphism from $O_+(1,n)$ to ${\rm Conf}(\sph^{n-1})$ by sending $\tau$ to $\ol\tau$ is indeed an isomorphism for $n\geq 3$, where ${\rm Conf}(\sph^{n-1})$ is the group of conformal transformation on $\sph^{n-1}$ equipped with standard metric. The proof of this classical isomorphism can be obtained by direct computation and by that ${\rm Conf}(\sph^{n-1})$ is generated by dilations, orthonormal transformations, translations and inversions of $\R^{n-1}\cup\{\infty\}$ when $\sph^{n-1}$ is identified as $\R^{n-1}\cup\{\infty\}$ via stereographic projection when $n\geq 3$ (see \cite[Corollary A.3.8]{BP}). Some related discussions can be found in \cite[\S 5]{Sl}. A detailed proof of this fact can be found in the appendix.

The isomorphism of $O_+(1,n)$ and ${\rm Conf}(\sph^{n-1})$ is another key ingredient in our proof of Theorem \ref{thm-main}. Physically, it indicates that there is a natural conformal structure on the null infinity induced by the spacetime (see \cite[\S 18.5]{Pe}).
\begin{proof}[Proof of Theorem \ref{thm-main}]Because of \eqref{eqn-cone-1} and \eqref{eqn-cone--1}, we only need to prove Theorem \ref{thm-main} for Minkowski spacetime.
\begin{enumerate}
\item It is a direct corollary of (1) and (2) in Lemma \ref{lem-main}.
\item By (1) of Lemma \ref{lem-main}, $\pi\circ\varphi:M\to \sph^{n-1}$ is a conformal immersion. Since $M$ is a closed manifold of dimension $n-1$, $\pi\circ\varphi$ is indeed a covering map. Note that $\sph^{n-1}$ is simply connected for $n\geq 3$. So, $\pi\circ\varphi$ is in fact a conformal diffeomorphism. Conversely, any conformal diffeomorphism from $M$ to $\sph^{n-1}$ induces an isometric embedding from $M$ to $\mathcal L_+$ by (2) of Lemma \ref{lem-main}.

    Moreover, from the above, $\pi\circ\varphi_i:M\to\sph^{n-1}$ is a conformal diffeomorphism for $i=1,2$. So, $\pi\circ\varphi_2\circ(\pi\circ\varphi_1)^{-1}\in{\rm Conf}(\sph^{n-1})$. By the isomorphism of $O_+(1,n)$ to ${\rm Conf}(\sph^{n-1})$ for $n\geq 3$, there is a Lorentz transformation $\tau$ such that
    \begin{equation}
    \ol\tau=\pi\circ\varphi_2\circ(\pi\circ\varphi_1)^{-1}
    \end{equation}
    which implies that
    \begin{equation}
      \pi\circ(\tau\circ\varphi_1)=\pi\circ\varphi_2.
    \end{equation}
By (3) of Lemma \ref{lem-main}, we have $\tau\circ\varphi_1=\varphi_2$.
    \item For each $p\in M$, let $U_p$ be a connected open neighborhood of $p$ such that
        $$\pi\circ \varphi_1|_{U_p}:U_p\to \pi\circ\varphi_1(U_p)\subset\sph^{n-1}$$ and
        $$\pi\circ \varphi_2|_{U_p}:U_p\to\pi\circ \varphi_2(U_p)\subset\sph^{n-1}$$ are both diffeomorphisms. By (1) of Lemma \ref{lem-main},
    $$\pi\circ \varphi_2|_{U_p}\circ(\pi\circ \varphi_1|_{U_p})^{-1}:\pi\circ\varphi_1(U_p)\to\pi\circ \varphi_2(U_p)$$ is a conformal diffeomorphism. By Liouville's theorem (see \cite[Theorem A.3.7 \& Corollary A.3.8]{BP}) and the isomorphism of $O_+(1,n)$ to ${\rm Conf}(\sph^{n-1})$, there is a unique $\tau_p\in O_+(1,n)$, such that
    \begin{equation}
      \ol{\tau_p}=\pi\circ \varphi_2|_{U_p}\circ(\pi\circ \varphi_1|_{U_p})^{-1}
    \end{equation}
    on $\pi\circ\varphi_1(U_p)$. So
    \begin{equation}
    \pi\circ\tau_p\circ\varphi_1=\pi\circ\varphi_2
    \end{equation}
    on $U_p$. This implies that ${\tau_p}\circ\varphi_1=\varphi_2$ on $U_p$ by (3) of Lemma \ref{lem-main}. By uniqueness of $\tau_p$, we know that the map $p\mapsto \tau_p$ from $M$ to $O_+(1,n)$ is locally constant. Since $M$ is connected, the map $p\mapsto \tau_p$ from $M$ to $O_+(1,n)$ is constant. This completes the proof of (3).
\end{enumerate}

\end{proof}
\begin{rem}
In the proof of (3) in Theorem \ref{thm-main}, we have used the fact that two conformal transformations of $\sph^{n-1}$ are identical on $\sph^{n-1}$ if they are identical on some open subset for $n\geq 3$. This comes from that any conformal transformation of $\sph^{n-1}$ is real analytic since the generators of ${\rm Conf}(\sph^{n-1})$ are all real analytic for $n\geq 3$.
\end{rem}

Next, we come to prove Theorem \ref{thm-iso-cone}.
\begin{proof}[Proof of Theorem \ref{thm-iso-cone}]
Similarly as before, because of \eqref{eqn-cone-1} and \eqref{eqn-cone--1}, we only need to prove it for Minkowski spacetime. Note that the future light cone $\mathcal L_+$ of $\R^{1,n}$ can be identified with $\R_+\times\sph^{n-1}$ equipped with the degenerate metric $t^2g_{\sph^{n-1}}$ by
\begin{equation}
\iota:\R_+\times \sph^{n-1}\to \R^{1,n}
\end{equation}
with $\iota(t,x)=(t,tx)$. Let
\begin{equation}
\varphi:\R_+\times \sph^{n-1}\to\R_+\times \sph^{n-1}
\end{equation}
be an isometry of the light cone. Suppose that $\varphi(t,x)=(f(t,x),\ol\varphi(t,x))$.
Then,
\begin{equation}
0=\vv<\frac{\p}{\p t},\frac{\p}{\p t}>=\vv<\varphi_*\frac{\p}{\p t},\varphi_*\frac{\p}{\p t}>=\left\|\frac{\p\ol\varphi}{\p t}\right\|^2.
\end{equation}
So, $\ol\varphi$ is independent of $t$ and
\begin{equation}
t^2g_{\sph^{n-1}}=\varphi^*(t^2g_{\sph^{n-1}})=f^2\ol\varphi^*g_{\sph^{n-1}}.
\end{equation}
So
\begin{equation}\label{eqn-ol-phi}
  \ol\varphi^*g_{\sph^{n-1}}=\frac{t^2}{f^2}g_{\sph^{n-1}}.
\end{equation}
This means that $\varphi$ descends to a conformal transformation $\ol\varphi$ of $\sph^{n-1}$ such that
$$\pi\circ\varphi=\ol\varphi\circ\pi$$
 on $\mathcal L_+$. By the isomorphism of $O_+(1,n)$ to $\conf(\sph^{n-1})$, there is a unique $\tau\in O_+(1,n)$, such that
\begin{equation}
\ol\tau=\ol\varphi.
\end{equation}
Suppose that $\ol\tau$ as an isometry of the light cone $\R_+\times\sph^{n-1}$ can be written as $\ol\tau=(\tilde f,\ol\varphi)$. Then, by \eqref{eqn-ol-phi}, we know that $\tilde f=f$. So,
\begin{equation}
\tau|_{\mathcal L_+}=\varphi.
\end{equation}

\end{proof}
\section{Appendix}
In the Appendix, we give a proof of the isomorphism of $O_+(1,n)$ and ${\rm Conf}(\sph^{n-1})$.
\begin{thm}
Let $\tau\in O_+(1,n)$. Then the induced map $\ol\tau\in {\rm Conf}(\sph^{n-1})$. Moreover, the map sending $\tau$ to $\ol\tau$ is a group isomorphism for $n\geq 3$.
\end{thm}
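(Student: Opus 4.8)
\emph{Overview.} The plan is to verify, in order, that every $\ol\tau$ is conformal, that $\tau\mapsto\ol\tau$ is a group homomorphism, that it is injective, and that it is onto when $n\ge 3$; essentially all the work is in the last step. For conformality and the homomorphism property I would argue as in the proof of Theorem \ref{thm-iso-cone}: identify $\mathcal L_+$ with $\R_+\times\sph^{n-1}$ carrying the degenerate metric $t^2g_{\sph^{n-1}}$ via $\iota(t,x)=(t,tx)$. Since $\tau$ is linear it commutes with the dilations $(t,x)\mapsto(st,sx)$ of the cone, so in these coordinates $\tau(t,x)=(t\,h(x),\ol\tau(x))$ for some positive $h$ on $\sph^{n-1}$, with $\ol\tau$ independent of $t$; comparing $\tau^*(t^2g_{\sph^{n-1}})=t^2g_{\sph^{n-1}}$ with $(th)^2\,\ol\tau^*g_{\sph^{n-1}}$ gives $\ol\tau^*g_{\sph^{n-1}}=h^{-2}g_{\sph^{n-1}}$, so $\ol\tau\in\conf(\sph^{n-1})$. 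That $\overline{\tau_1\tau_2}=\ol{\tau_1}\circ\ol{\tau_2}$ then follows at once from $\pi\circ\tau=\ol\tau\circ\pi$ and the surjectivity of $\pi$.

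\emph{Injectivity.} Suppose $\ol\tau=\id$. Write a future directed null vector as $e_0+u$, where $e_0$ is the unit future timelike vector and $u\in\sph^{n-1}\subset\R^n$; the hypothesis forces $\tau(e_0+u)=\lambda(u)\,(e_0+u)$ for some $\lambda(u)>0$. Since $\tau$ preserves the Minkowski product and $\vv<e_0+u_1,e_0+u_2>=-1+\la u_1,u_2\ra$, one gets $\lambda(u_1)\lambda(u_2)=1$ whenever $u_1\ne u_2$; choosing three distinct directions and using positivity forces $\lambda\equiv 1$. As the vectors $e_0+u$, $u\in\sph^{n-1}$, span $\R^{1,n}$, this yields $\tau=\id$. (This argument works for all $n\ge 2$.)

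\emph{Surjectivity, the crux, requiring $n\ge 3$.} Here I would invoke the classical structure of $\conf(\sph^{n-1})$, namely Liouville's theorem \cite[Theorem A.3.7 \& Corollary A.3.8]{BP}: under stereographic projection $\sph^{n-1}\cong\R^{n-1}\cup\{\infty\}$, $\conf(\sph^{n-1})$ is generated by the orthogonal maps, the dilations, and the translations of $\R^{n-1}$, together with the inversion $\xi\mapsto\xi/\|\xi\|^2$. It then suffices to realize each generator as $\ol\tau$. Introduce null coordinates $v=t-x_n$, $w=t+x_n$, $x'=(x_1,\dots,x_{n-1})$, so the Minkowski form is $\|x'\|^2-vw$ and, on the part of $\mathcal L_+$ with $w\ne 0$, the stereographic coordinate is $\xi=x'/w$ (using $\|x'\|^2=vw$ on the cone). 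Then $\diag(1,R,1)$ with $R\in O(n-1)$ acting on $x'$ induces $\xi\mapsto R\xi$; the boost $v\mapsto e^sv,\ w\mapsto e^{-s}w$ induces $\xi\mapsto e^s\xi$; the null rotation $x'\mapsto x'+a w,\ v\mapsto v+2\la a,x'\ra+\|a\|^2 w,\ w\mapsto w$ induces $\xi\mapsto\xi+a$; and the reflection $x_n\mapsto -x_n$, i.e.\ $v\leftrightarrow w$, induces $\xi\mapsto\xi/\|\xi\|^2$. Each of these maps visibly lies in $O_+(1,n)$ — it preserves $\|x'\|^2-vw$ and the future timelike cone — so the homomorphism is onto.

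\emph{On the obstacle.} The only nonroutine ingredients are the appeal to Liouville's theorem and the organization of the four explicit verifications in the previous paragraph. The former is precisely where $n-1\ge 2$, i.e.\ $n\ge 3$, enters: for $n=2$ the group $\conf(\sph^1)$ is infinite dimensional and the isomorphism fails. I expect the main (though purely computational) difficulty to be keeping the null-coordinate bookkeeping clean in that step; nothing conceptually deep remains.
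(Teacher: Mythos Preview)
Your proof is correct and follows the same overall strategy as the paper: both invoke Liouville's theorem to reduce surjectivity to realizing a finite generating set of $\conf(\sph^{n-1})$ by explicit elements of $O_+(1,n)$, and both dispose of injectivity by checking the kernel is trivial. The execution differs in two pleasant ways. First, the paper works with the block form $\tau=\begin{pmatrix}a&u^T\\v&A\end{pmatrix}$ and the stereographic chart $w=z'/(1-z_1)$, then solves for $a,u,v,A$ case by case; your null-coordinate parametrization $v=t-x_n$, $w=t+x_n$, $\xi=x'/w$ makes the four generators (orthogonal block, boost, null rotation, reflection swapping $v\leftrightarrow w$) transparent one-liners and avoids the matrix bookkeeping entirely. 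Second, the paper simply asserts that the kernel is trivial, whereas your argument via $\lambda(u_1)\lambda(u_2)=1$ for distinct $u_1,u_2\in\sph^{n-1}$ and the span of null vectors is a clean, self-contained verification. Neither difference is conceptual---the dependence on $n\ge 3$ enters through Liouville in both---but your version is tidier.
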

\begin{proof} Note that $\ol\tau\in {\rm Conf}(\sph^{n-1})$ has been shown in the proof of Theorem \ref{thm-iso-cone}. So, we only need to show that the map sending $\tau$ to $\ol\tau$ is a group isomorphism.

Let $\tau=\left(\begin{array}{cc}a&u^T\\v&A
\end{array}\right)\in O_+(1,n)$ where $a\in \R_+$, $u,v\in\R^n$ and $A\in \R^{n\times n}$. Then, $\tau\in O_+(1,n)$ is equivalent to
\begin{equation}\label{eqn-Lorentz}
\left\{\begin{array}{l}a^2-\|v\|^2=1\\
A^Tv-au=0\\
A^TA-uu^T=I_n\\
a>0.
\end{array}\right.
\end{equation}
It is clear that
\begin{equation}
\ol\tau(z)=\frac{Az+v}{u^Tz+a}
\end{equation}
with $z\in \sph^{n-1}\subset \R^n$.

By Liouville's theorem (see \cite[Proposition A.3.4 \& Theorem A.3.7]{BP}), the group $\conf(\sph^{n-1})$ is generated by
\begin{enumerate}
\item dilation: $w\mapsto \lambda w$ with $\lambda\neq 0$;
\item orthonormal transformation: $w\mapsto B w$ with $B\in O(n-1)$;
\item inversion: $w\mapsto \frac{w-w_0}{\|w-w_0\|^2}$ with $w_0\in \R^{n-1}$;
\item translation: $w\mapsto w+b$ with $b\in \R^{n-1}$
\end{enumerate}
when identifying $\sph^{n-1}$ with $\R^{n-1}\cup\{\infty\}$ by stereographic projection:
\begin{equation}
    w=\frac{z'}{1-z_1}.
\end{equation}
Here, we write $z$ as $(z_1,z')$.
So, to show surjectivity of the map, we only need to show that each of the generators has a preimage.
\begin{enumerate}
\item Dilation. By direct computation using the stereographic projection, the dilation $\tilde w=\lambda w$ on $\R^{n-1}$ is corresponding to
    \begin{equation}
    \left\{\begin{array}{l}\tilde z_1=\frac{(\lambda^2+1)z_1+\lambda^2-1}{(\lambda^2-1)z_1+\lambda^2+1}=\frac{(\lambda+1/\lambda)z_1+\lambda-1/\lambda}{(\lambda-1/\lambda)z_1+\lambda+1/\lambda}\\
    \tilde z'=\frac{2\lambda z'}{(\lambda^2-1)z_1+(\lambda^2+1)}=\frac{2 z'}{(\lambda-1/\lambda)z_1+\lambda+1/\lambda}   \end{array}\right.
    \end{equation}
    on $\sph^{n-1}$. Let $u=\frac{1}{2}(\lambda-\frac{1}{\lambda})e_1$, $a=\frac{1}{2}(\lambda+\frac{1}{\lambda})$, $v=\frac{1}{2}(\lambda-\frac{1}{\lambda})e_1$ and $A=\diag (\frac{1}{2}(\lambda+\lambda^{-1}),1,\cdots,1)$ when $\lambda>0$. Then, $a,u,v,A$ satisfy \eqref{eqn-Lorentz} and
\begin{equation}
\tilde z=\frac{Az+v}{a+u^Tz}.
\end{equation}
So, we find the preimage of the dilation. When $\lambda<0$, one could just replace $u,v,a,A$ by the negatives of the previous $u,v,a,A$.
\item Orthonormal transformation. The orthonormal transformation $\tilde w=Bw$ is corresponding to
    \begin{equation}
    \left\{\begin{array}{l}\tilde z_1=z_1\\\tilde z'=Bz'.
    \end{array}\right.    \end{equation}
    Let $a=1,u=v=0$ and $A=\diag(1,B)$. Then $a,u,v,A$ satisfy \eqref{eqn-Lorentz} and
    \begin{equation}
\tilde z=\frac{Az+v}{a+u^Tz}.
\end{equation}
Thus we find the preimage of the orthonormal transformation.
\item Inversion. The inversion $\tilde w=\frac{w-w_0}{\|w-w_0\|^2}$  is corresponding to
    \begin{equation}
    \left\{\begin{array}{l}\tilde z_1=\frac{(-1+\|w_0\|^2/2)z_1+w_0^Tz'-\|w_0\|^2/2}{-\|w_0\|^2z_1/2-w_0^Tz'+1+\|w_0\|^2/2}\\
    \tilde z'=\frac{z_1w_0+z'-w_0}{-\|w_0\|^2z_1/2-w_0^Tz'+1+\|w_0\|^2/2}.
    \end{array}\right.
    \end{equation}
    By letting $a=1+\|w_0\|^2/2$, $u=\left(\begin{array}{c}-\frac{\|w_0\|^2}{2}\\-w_0
    \end{array}\right)$, $v=\left(\begin{array}{c}\frac{-\|w_0\|^2}{2}\\-w_0
    \end{array}\right)$ and $A=\left(\begin{array}{cc}-1+\frac{\|w_0\|^2}{2}&w_0^T\\w_0&I_{n-1}
    \end{array}\right)$, we have
    \begin{equation}
    \tilde z=\frac{Az+v}{u^Tz+a}
    \end{equation}
    and $a,u,v,A$ satisfying \eqref{eqn-Lorentz}. Thus, we find the preimage of the inversion.
\item Translation. The translation $\tilde w=w+b$ is corresponding to
    \begin{equation}
    \left\{\begin{array}{l}
    \tilde z_1=\frac{(1-\frac{\|b\|^2}{2})z_1+b^Tz'+\frac{\|b\|^2}{2}}{-\frac{\|b\|^2}{2}z_1+b^Tz'+(1+\frac{\|b\|^2}{2})}\\
    \tilde z'=\frac{z'+(1-z_1)b}{-\frac{\|b^2\|}{2}z_1+b^Tz'+(1+\frac{\|b\|^2}{2})}.
    \end{array}\right.\end{equation}
    By letting $a=1+\|b\|^2/2$, $u=\left(\begin{array}{c}-\frac{\|b\|^2}{2}\\b
    \end{array}\right)$, $v=\left(\begin{array}{c}\frac{\|b\|^2}{2}\\b
    \end{array}\right)$ and $A=\left(\begin{array}{cc}1-\frac{\|b\|^2}{2}&b^T\\-b&I_{n-1}
    \end{array}\right)$, we have
    \begin{equation}
    \tilde z=\frac{Az+v}{u^Tz+a}
    \end{equation}
    and $a,u,v,A$ satisfying \eqref{eqn-Lorentz}. Thus, we find the preimage of the translation.
\end{enumerate}
Injectivity of the map can be shown easily by checking that the kernel of the map is trivial. This completes the proof of the theorem.
\end{proof}

\end{document}